\newtheorem{theorem}{Theorem}[section]
\newtheorem{lemma}[theorem]{Lemma}
\newtheorem{claim}[theorem]{Claim}
\newtheorem*{definition*}{Defenition}
\newtheorem{proposition}[theorem]{Proposition}
\newtheorem{problema}{Problem}
\newtheorem{thm}{Theorem}
\newcommand{\prdct}[2]{W_{#2}\left(#1\right)}
\begin{document}
\title{The Advantage of Truncated Permutations}
\author{Shoni Gilboa}
\address{The Open University of Israel, Raanana 4353701, Israel.}
\author{Shay Gueron}
\address{University of Haifa, Haifa 3498838, Israel, and Amazon, USA. }

\begin{abstract}
Constructing a Pseudo Random Function (PRF) is a fundamental problem in cryptology. 
Such a construction, implemented by truncating the last $m$ bits of permutations of $\{0, 1\}^{n}$ was suggested by Hall et al. (1998). They conjectured that the distinguishing advantage of an adversary with $q$ queries, ${\bf Adv}_{n, m} (q)$, is small if $q = o (2^{(n+m)/2})$, established an upper bound on ${\bf Adv}_{n, m} (q)$ that confirms the conjecture for $m < n/7$, and also declared a general lower bound ${\bf Adv}_{n,m}(q)=\Omega(q^2/2^{n+m})$. The conjecture was essentially confirmed by Bellare and Impagliazzo (1999). Nevertheless, the problem of {\em estimating} ${\bf Adv}_{n, m} (q)$ remained open. Combining the trivial bound $1$, the birthday bound, and a result of Stam (1978) leads to the upper bound
\begin{equation*}
{\bf Adv}_{n,m}(q) = O\left(\min\left\{\frac{q(q-1)}{2^n},\,\frac{q}{2^{\frac{n+m}{2}}},\,1\right\}\right).
\end{equation*}
In this paper we show that this upper bound is tight for every $0\leq m<n$ and any $q$. 
This, in turn, verifies that the converse to the conjecture of Hall et al. is also correct, i.e., that ${\bf Adv}_{n, m} (q)$ is negligible only for $q = o (2^{(n+m)/2})$. 
\end{abstract}

\keywords{pseudo random function advantage.}

\maketitle

\section{Introduction}
\label{sec:intro}
For every positive integer $k$, denote ${\mathcal B}_k:=\{0,1\}^k$. 
For positive integers $\ell, n$, let ${\mathcal F }_{n,\ell}$ be the set of functions from ${\mathcal B}_{n}$ to ${\mathcal B}_{\ell}$. 
A Pseudo Random Function (PRF) from ${\mathcal B}_{n}$ to ${\mathcal B}_{\ell}$ is a random variable taking values in ${\mathcal F}_{n,\ell}$.
The quality of a PRF $\Phi$ is determined by the ability of an ``adversary" to distinguish an instance of $\Phi$ from a function chosen uniformly at random from ${\mathcal F }_{n,\ell}$, in the following setting. It is assumed that the adversary has only query access to a function $\varphi: {\mathcal B}_{n} \to {\mathcal B}_{\ell}$, which is either selected uniformly at random from ${\mathcal F }_{n,\ell}$, or is an instance of the PRF $\Phi$. 
The adversary may use any algorithm ${\mathcal A}$ that first selects (possibly adaptively) a sequence of queries to the function, i.e., strings in ${\mathcal B}_n$, and then outputs a bit that we may interpret as the guess of ${\mathcal A}$. For $b \in \{0, 1\}$, let $P_{\Phi}^{{\mathcal A}} (b)$ be the probability that the output is $b$ when $\varphi$ is the PRF, and let $P_{U}^{{\mathcal A}} (b)$ be the probability that the output is $b$ when $\varphi$ is selected from ${\mathcal F }_{n,\ell}$ uniformly at random. 
The {\em advantage} of the algorithm ${\mathcal A}$ against the PRF $\Phi$ is defined as
$ \left| P_{\Phi}^{{\mathcal A}} (1) - P_{U}^{{\mathcal A}} (1) \right|$ (which also equals 
$\left| P_{\Phi}^{{\mathcal A}} (0) - P_{U}^{{\mathcal A}} (0) \right|$). 
The advantage of the adversary against the PRF $\Phi$ is the maximal advantage of ${\mathcal A}$ against $\Phi$ over all the algorithms it may use, as a function of the number of queries.
Hereafter, we consider adversaries with no computational limitations.

The classical example of a PRF from ${\mathcal B}_{n}$ to ${\mathcal B}_{n}$ is a permutation of ${\mathcal B}_{n}$ chosen uniformly at random. 
The advantage {\bf Adv} of this PRF is given by
\begin{align}
{\bf Adv} (q)&=1-\left(1-\frac{1}{2^n}\right)\left(1-\frac{2}{2^n}\right)\cdots\left(1-\frac{\min\{q,2^n\}-1}{2^n}\right)\nonumber\\
&=\Theta\left(\min\left\{\frac{q(q-1)}{2^n},\,1\right\}\right), \label{eq:birthday}
\end{align}
achieved by an adversary that executes the ``collision test'' (i.e., submits $\min\{q,2^n\}$ distinct queries and outputs $1$ if no two replies are equal, and $0$ otherwise). 
This implies that the number of queries required to distinguish a random permutation from a random function, with success probability significantly larger than, say, $1/2$, is $\Theta(2^{n/2})$. In other words, a permutation can be used safely (e.g., as a one-time-pad) as long as the number of outputs ($q$) that it produces is sufficiently lower than $2^{n/2}$.

\smallskip
A generalization of the above PRF is the following. 
\begin{definition*}
For integers $0\leq m<n$, let $\texttt{TRUNC}_{n, m}\in{\mathcal F}_{n,n-m}$ be defined by
$(x_1, x_2, \ldots x_{n})$ $\mapsto$ $(x_1, x_2, \ldots x_{n-m})$. The 
``Truncated Permutation" PRF from ${\mathcal B}_n$ to ${\mathcal B}_{n-m}$ is the composition
$\texttt{TRUNC}_{n, m} \circ \pi$, where $\pi$ is a permutation of ${\mathcal B}_{n}$ chosen uniformly at random. 
We denote the advantage of an (computationally unbounded) adversary against this PRF by ${\bf Adv}_{n, m}$.
\end{definition*}
Clearly, ${\bf Adv}_{n,m}(q)={\bf Adv}_{n,m}(\min\{q,2^n\})$, so we may restrict our attention to $q\leq 2^n$.

\medskip
The following problem arises naturally.
\begin{problema}\label{problem1}
For every $0\leq m<n$ and $q\leq 2^n$, find (the order of magnitude of) ${\bf Adv}_{n,m}(q)$.
\end{problema} 
A different, related, problem is the following.
\begin{problema}\label{problem2}
For every $0\leq m<n$, how many queries does the adversary need in order to gain non-negligible advantage against the Truncated Permutation PRF?
Specifically, what is (the order of magnitude) of $q_{1/2}(n,m)=\min\{q\mid {\bf Adv}_{n,m}(q)\geq 1/2\}$?
\end{problema}
Note that the classical `birthday bounds' 
\begin{equation}\label{eq:birthday_upper}
{\bf Adv}_{n,m}(q)\leq1-\left(1-\frac{1}{2^n}\right)\left(1-\frac{2}{2^n}\right)\cdots\left(1-\frac{q-1}{2^n}\right)\leq\min\left\{\frac{q(q-1)}{2^{n+1}},\,1\right\},
\end{equation} 
and hence $q_{1/2}(n,m)=\Omega(2^{n/2})$, are obviously valid. 
Indeed, every algorithm that the adversary can use with the truncated replies of $(n-m)$ bits from $\pi(w)$ ($w \in {\mathcal B}_n$) can also be used by the adversary who sees the full $\pi(w)$ (it can simply ignore $m$ bits and apply the same algorithm).
Of course, we expect `better' bounds that would reflect the fact that the adversary receives less information when $\pi(w)$ is truncated, and would allow for using the outputs of a (truncated) permutation for significantly more than $2^{n/2}$ times. 

Problems \ref{problem1} and \ref{problem2} were studied by Hall et al. \cite{Hall} in 1998, where the truncated (random) permutation were proposed as a PRF construction. 
They declared\footnote{The paper \cite{Hall} only provide a sketch of proof of \eqref{eq:Hall_lower} and claims that the computation may be completed by using techniques presented in the paper. We could not see how this is the case.
We therefore refer to \eqref{eq:Hall_lower} only as a `declared' result.} the lower bound 
\begin{equation}\label{eq:Hall_lower}
{\bf Adv}_{n,m}(q)=\Omega(q^2/2^{n+m})
\end{equation}
for every $0\leq m<n$ and $q\leq 2^{(n+m)/2}$. This bound implies that $q_{1/2}(n,m)=O (2^{(n+m)/2})$ for every $0\leq m<n$. 
Hall et al. also proved in \cite{Hall} the upper bound
\begin{equation}\label{Hall_result}
{\bf Adv}_{n,m}(q) \le 5\left(\frac{q}{2^{\frac{n+m}{2}}}\right)^{\frac{2}{3}}+\frac{1}{2}\left(\frac{q}{2^{\frac{n+m}{2}}}\right)^3\frac{1}{ 2^{\frac{n-7m}{2}}}\end{equation}
for every $0\leq m<n$ and any $q$. For $m\leq n/7$ this implies that $q_{1/2}(n,m)=\Omega(2^{(n+m)/2})$. However, for larger values of $m$, the bound on $q_{1/2}(n,m)$ that is offered by \eqref{Hall_result} deteriorates, and becomes (already for $m>n/4$) worse than the trivial birthday bound $q_{1/2}(n,m)=\Omega(2^{n/2})$. 
They conjectured that an adversary needs $\Omega(2^{(n+m)/2})$ queries in order to get non-negligible advantage, in the general case. 

It was shown in \cite[Theorem 4.2]{BI} that 
\begin{equation}\label{eq:BI}{\bf Adv}_{n,m}(q)=O(n)\frac{q}{2^{\frac{n+m}{2}}}\end{equation}
whenever $2^{n-m}<q<2^{\frac{n+m}{2}}$. 
This implies that $q_{1/2}=\Omega(\frac{1}{n}2^{\frac{n+m}{2}})$ for $m>\frac{1}{3}n+\frac{2}{3}\log_2 n+\Omega(1)$.

The method used to show \eqref{Hall_result} can be pushed to prove the conjecture made in \cite{Hall}, thus settling Problem~\ref{problem2}, for almost every $m$. In particular, it was shown in \cite{GG} that 
\begin{equation}\label{eq:GG1}
{\bf Adv}_{n,m}(q)\leq 2\sqrt[3]{2}\left(\frac{q}{2^{\frac{n+m}{2}}}\right)^{\frac{2}{3}}+\frac{2\sqrt{2}}{\sqrt{3}}\left(\frac{q}{2^{\frac{n+m}{2}}}\right)^{\frac{3}{2}}+\left(\frac{q}{2^{\frac{n+m}{2}}}\right)^2\end{equation}
for $ m\leq \frac{n}{3}$ and that
\begin{equation}\label{eq:GG2}
{\bf Adv}_{n,m}(q)\leq 3\left(\frac{q}{2^{\frac{n+m}{2}}}\right)^{\frac{2}{3}}+2\left(\frac{q}{2^{\frac{n+m}{2}}}\right)+5\left(\frac{q}{2^{\frac{n+m}{2}}}\right)^2+ 
\frac{1}{2}\left(\frac{2q}{2^{\frac{n+m}{2}}}\right)^{\frac{n}{n-m}}\end{equation}
for $\frac{n}{3}<m \leq n-\log_2(16n)$.
This implies that $q_{1/2}(n,m)=\Omega(2^{\frac{m+n}{2}})$ for every $0 \leq m \leq n-\log_2 (16n)$.

Surprisingly, it turns out that Problem~\ref{problem2} was solved, in a different context, $20$ years before it was raised in \cite{Hall}. The bound 
\begin{equation}\label{eq:Stam}
{\bf Adv}_{n,m}(q)\leq\frac{1}{2}\sqrt{\frac{(2^{n-m}-1)q(q-1)}{(2^n-1)(2^n-(q-1))}}
\leq\frac{1}{2\sqrt{1-\frac{q-1}{2^n}}}\cdot\frac{q}{2^{\frac{n+m}{2}}},\end{equation}
which is valid for every $0\leq m<n$ and $q\leq 2^n$, follows directly from a result of Stam \cite[Theorem 2.3]{Stam}. This implies that 
$q_{1/2}(n,m)=\Omega(2^{(n+m)/2})$ for every $0 \leq m<n$, confirming the conjecture of \cite{Hall} in all generality.

\medskip
This settles Problem~\ref{problem2}, but Problem~\ref{problem1} still remains quite open. 
Note that the bound \eqref{eq:Stam} is tighter than the bounds \eqref{Hall_result}, \eqref{eq:BI}, \eqref{eq:GG1} and \eqref{eq:GG2}.
Therefore, summarizing the above results, the best known upper bound for the advantage in Problem~\ref{problem1}, is the one obtained by combining \eqref{eq:birthday_upper} and \eqref{eq:Stam}, namely 
 \begin{align}
{\bf Adv}_{n,m}(q)&\leq\min\left\{\frac{q(q-1)}{2^{n+1}},\,\frac{1}{2}\sqrt{\frac{(2^{n-m}-1)q(q-1)}{(2^n-1)(2^n-(q-1))}},\, 1\right\}\label{eq:upper}\\
&=\Theta\left(\min\left\{\frac{q(q-1)}{2^n},\,\frac{q}{2^{\frac{n+m}{2}}},\,1\right\}\right),\nonumber
\end{align}
whereas the only general lower bound that we are aware of is the bound \eqref{eq:Hall_lower}, declared in \cite{Hall}.
It follows from \eqref{eq:birthday} that the bound \eqref{eq:upper} is tight if $m=0$, and it was shown in \cite{GGM} that it is tight also in the case $m=n-1$. 

In this paper we settle Problem \ref{problem1} by showing that \eqref{eq:upper} is always tight, as formulated in the following theorem.
\begin{thm}\label{thm:main}
For every $0\leq m<n$ and any $q$,
\begin{equation*}
{\bf Adv}_{n,m}(q)=\Theta\left(\min\left\{\frac{q(q-1)}{2^n},\,\frac{q}{2^{\frac{n+m}{2}}},\,1\right\}\right).
\end{equation*}
\end{thm}
In particular, note that this implies that the bound \eqref{eq:Hall_lower} is, in general, not tight. 

\smallskip
We point out that the proof of Theorem \ref{thm:main} shows that the lower bound still holds if the adversary can use only computatinally efficient algorithms.

\medskip
A short version of this paper, with only a hint of the proof, appears in \cite{GG_CSCML}.

\section{Notation and Preliminaries}
\label{sec:notation}
For $0\leq m<n$ and $1\leq q\leq 2^n$, we view $({\mathcal B}_{n-m})^q$ as the set of all possible sequences of replies that the adversary gets for his $q$ queries. 
We remark here that in our problem, we may assume that all the queries are fixed and distinct.
For every $\omega=(\omega_i)_{i=1}^q\in({\mathcal B}_{n-m})^q$ and $\alpha\in{\mathcal B}_{n-m}$ let 
\begin{equation*}
d_{\alpha}(\omega):=\#\{1\leq i\leq q\mid\omega_i=\alpha\},
\end{equation*}
i.e., $d_{\alpha}(\omega)$ is the number of times $\alpha$ appears in the sequence $\omega$.
For every positive real $t$, let $\prdct 0 t:=1$ and for every positive integer $k$, 
\begin{equation*}
\prdct k t:= \prod_{j=0}^{k-1}\left(1-\frac{j}{t}\right).
\end{equation*}
As in Section \ref{sec:intro}, consider an adversary that has only query access to a function $\varphi: {\mathcal B}_{n} \to {\mathcal B}_{n-m}$, which is either selected uniformly at random from ${\mathcal F }_{n,n-m}$, or is $\texttt{TRUNC}_{n, m} \circ \pi$, where $\pi$ is a permutation of ${\mathcal B}_{n}$ chosen uniformly at random. 
For every $\omega\in({{\mathcal B}}_{n-m})^q$, the probability that $\omega$ is the actual sequence of replies that the adversary gets for his queries is obviously $\frac{1}{2^{(n-m)q}}$ in the former case, and it is easy to verify that it is $\frac{1}{2^{(n-m)q}}R(\omega)$ in the latter, where
\begin{equation*}
R(\omega):=\frac{\prod_{\alpha\in{\mathcal B}_{n-m}}\prdct{d_{\alpha}(\omega)}{2^m}}{\prdct q {2^n}}.
\end{equation*} 
Suppose that the adversary uses an algorithm ${\mathcal A}$ and let $S_{\mathcal A}\subseteq({\mathcal B}_{n-m})^q$ be the set of sequences of replies for which ${\mathcal A}$ outputs $1$. Then,
\begin{equation*}
P_{U}^{{\mathcal A}}(1)=\frac{1}{2^{(n-m)q}}|S_{\mathcal A}|,\quad
P_{\texttt{TRUNC}_{n, m} \circ \pi}^{{\mathcal A}} (1)=\frac{1}{2^{(n-m)q}}\sum_{\omega\in S_{\mathcal A}}R(\omega),
\end{equation*}
and the advantage of ${\mathcal A}$ against the PRF $\texttt{TRUNC}_{n, m} \circ \pi$ is therefore
$\frac{1}{2^{(n-m)q}}|\sum_{\omega\in S_{\mathcal A}}\left(R(\omega)-1\right)|$.

We conclude that
\begin{equation}\label{eq:AdvR}
{\bf Adv}_{n,m}(q)=\max_{S\subseteq{({\mathcal B}_{n-m})^q}}\frac{1}{2^{(n-m)q}}\left\lvert\sum_{\omega\in S}\left(R(\omega)-1\right)\right\rvert.
\end{equation}

\section{Proof of Theorem \ref{thm:main}}
We first address the regime $q\leq 2^{\frac{n-m}{2}+8}$, in which
\begin{equation*}
\min\left\{\frac{q(q-1)}{2^n},\,\frac{q}{2^{\frac{n+m}{2}}},\,1\right\}=\Theta\left(\frac{q(q-1)}{2^n}\right).
\end{equation*}
\begin{proposition}\label{prop:small}
If $q\leq 2^{\frac{n-m}{2}+8}$, then
\begin{equation*}
{\bf Adv}_{n,m}(q)=\Omega\left(\frac{q(q-1)}{2^n}\right).
\end{equation*}
\end{proposition}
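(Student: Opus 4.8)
The plan is to exhibit an explicit distinguisher and lower-bound its advantage, rather than to work with the optimal set $S$ directly. The natural test in this regime is the \emph{collision test}: the adversary outputs $1$ if among the $q$ replies there is at least one collision, i.e.\ if $X+\binom{q}{2}2^{-(n-m)}>0$ in the notation of Section~\ref{sec:notation}. For a truly random function the expected number of collisions is exactly $\binom{q}{2}2^{-(n-m)}$, while for the truncated permutation the expected number of collision \emph{pairs} is $\binom{q}{2}\cdot\frac{2^{m}(2^{m}-1)}{2^{n}(2^{n}-1)}$, which is strictly smaller. The gap between these two expectations is of order $\binom{q}{2}\cdot 2^{-(n-m)}\cdot\frac{2^{n-m}-1}{2^{n}-1}=\Theta\!\left(\binom{q}{2}/2^{n}\right)=\Theta(q^2/2^n)$ in the stated range (note $q\le 2^{(n-m)/2+8}$ forces $q\ll 2^{(n+m)/2}$ and in fact $q^2/2^n\le 2^{16}\cdot 2^{-m}\le 1$ up to the constant, so the quantity is a legitimate probability gap).

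The key steps, in order, are: (i) write the advantage as $\mathrm{E}\max\{1-R,0\}$ via \eqref{eq:AdvR<1}, or alternatively bound the advantage of the concrete collision test from below by the difference of collision \emph{probabilities} $\Pr_{U}[\text{collision}]-\Pr_{\mathrm{TRUNC}}[\text{collision}]$; (ii) control these collision probabilities through second-moment information on the collision count. Under the uniform model the number of colliding pairs has mean $\mu:=\binom{q}{2}2^{-(n-m)}$ and, by Lemma~\ref{lem:moments}, $\mathrm{E}X^2=\mu(1-2^{-(n-m)})$, so the collision count has variance $\Theta(\mu)$; since in our regime $\mu\le 2^{16}/2\cdot(1-2^{-(n-m)})=O(1)$, the collision count is typically $0$, and $\Pr_U[\text{collision}]=\Theta(\mu)=\Theta(q^2/2^n)$ by a Paley--Zygmund / inclusion-exclusion argument (the lower bound $\Pr_U[\text{collision}]\ge \mu-\binom{q}{2}^2\cdot(\text{higher order})$ suffices, or one may simply use that for $q\le 2^{(n-m)/2+8}$ two fixed pairs collide simultaneously with probability at most $2^{-2(n-m)}$, giving $\Pr_U[\ge 1\text{ collision}]\ge \mu - \binom{\binom{q}{2}}{2}2^{-2(n-m)}=\Omega(\mu)$); (iii) show that conditioning on the permutation model can only \emph{decrease} the collision probability, and moreover decreases the \emph{expected} count by a factor $1-\Theta(2^{n-m}/2^n)$, so that $\Pr_{\mathrm{TRUNC}}[\text{collision}]\le \Pr_U[\text{collision}]-\Omega(q^2/2^n)$; combining (ii) and (iii) gives ${\bf Adv}_{n,m}(q)\ge \Pr_U[\text{collision}]-\Pr_{\mathrm{TRUNC}}[\text{collision}]=\Omega(q^2/2^n)$.

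An alternative and perhaps cleaner route avoids the collision test and works directly with $R$: use \eqref{eq:AdvR<1}, namely ${\bf Adv}_{n,m}(q)=\mathrm{E}\max\{1-R,0\}\ge \mathrm{E}(1-R)\mathbf{1}_{\{R<1\}}$, and lower-bound this by a truncated second-moment computation of $R$ around its mean $1$, using that $\mathrm{Var}(R)=\Theta(q^2/2^n)$ in this range (which can be extracted from the moment formulas of Lemma~\ref{lem:moments} together with the expansion of $\ln R$ from Lemmas~\ref{lem:Wlower}--\ref{lem:moreW}); then $\mathrm{E}\max\{1-R,0\}\asymp \mathrm{E}|1-R|\asymp \sqrt{\mathrm{Var}(R)}=\Theta(q/2^{n/2})$... which is \emph{too large}, so one must be careful --- in fact the right statement is that $R$ is concentrated but slightly \emph{above} $1$ on a $\Theta(q^2/2^n)$-probability ``collision'' event and slightly below otherwise, and $\mathrm{E}\max\{1-R,0\}=\Theta(q^2/2^n)$ comes from the small downward bias. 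The main obstacle, and the step that needs genuine care, is exactly this: showing the \emph{signed} gap is of the claimed order and not swamped by fluctuations, i.e.\ proving a lower bound on $\Pr_U[\text{collision}]-\Pr_{\mathrm{TRUNC}}[\text{collision}]$ rather than merely on either probability; the collision-test formulation makes this transparent because both probabilities admit exact product formulas, so I would carry out the argument via (i)--(iii) above and relegate the second-moment bound of step (ii) to a short inclusion-exclusion estimate.
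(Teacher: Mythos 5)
Your central idea --- run the collision test and lower-bound the advantage by $\Pr_U[\text{collision}]-\Pr_{\mathrm{TRUNC}}[\text{collision}]$ --- is exactly the paper's approach: the paper takes $S$ to be the no-collision event, and $\sum_{\omega\in S}(R(\omega)-1)/|\Omega|$ is precisely that difference of probabilities. But the execution you propose for steps (ii) and (iii) would not go through on the whole range $1<q\le 2^{(n-m)/2+8}$. First, the inclusion--exclusion bound in (ii), $\Pr_U[\ge 1 \text{ collision}]\ge\mu-\binom{\binom{q}{2}}{2}2^{-2(n-m)}\approx\mu-\mu^2/2$ with $\mu=\binom{q}{2}2^{-(n-m)}$, is vacuous (negative) once $q\gg 2^{(n-m)/2}$; in the upper part of your range $\mu$ can be as large as $2^{15}$, so the collision count is not ``typically $0$''. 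Second, and more seriously, step (iii) as justified is a non sequitur: a gap of order $q^2/2^n$ between the \emph{expected} collision counts does not transfer to the collision \emph{probabilities}, and the natural second-moment repair provably fails for large $m$, since the error term $\mathrm{E}_U\binom{N}{2}\approx\mu^2/2$ already dominates $q^2/2^n$ once $q\gtrsim 2^{(n-2m)/2}$, which is far below $2^{(n-m)/2+8}$ when $m$ is large. Third, when $2^{n-m}<q\le 2^{(n-m)/2+8}$ (possible for small $n-m$) collisions are forced under both models and the collision test has zero advantage, so an extra reduction is unavoidable.

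You do point at the correct fix --- the exact product formulas --- but you never carry it out, and it is the whole proof rather than a detail to be relegated. Concretely, $\Pr_U[\text{no collision}]=W(q,2^{n-m})$ and $\Pr_{\mathrm{TRUNC}}[\text{no collision}]=W(q,2^{n-m})/W(q,2^n)$, because $R\equiv 1/W(q,2^n)$ on the no-collision event; hence the gap factors as $W(q,2^{n-m})\left(\frac{1}{W(q,2^n)}-1\right)$. Lemma~\ref{lem:Wupper} gives $\frac{1}{W(q,2^n)}-1\ge e^{q(q-1)/2^{n+1}}-1\ge\frac{q(q-1)}{2^{n+1}}$, and Lemma~\ref{lem:Wlower} gives $W(q,2^{n-m})=\Omega(1)$ (with an absolute, if tiny, constant) provided $q\le 2^{n-m-1}$ in addition to $q\le 2^{(n-m)/2+8}$; the leftover range $2^{n-m-1}<q\le 2^{(n-m)/2+8}$ is then handled by monotonicity of ${\bf Adv}_{n,m}$ in $q$, using $(2^{n-m-1})^2/2^n=\Omega(q^2/2^n)$ there. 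With these ingredients made explicit your argument coincides with the paper's proof; as written, step (iii) is a genuine gap.
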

\begin{proof} 
Assume first, in addition, that $q\leq 2^{n-m-1}$. Let 
\begin{equation*}
S:=\left\{\omega\in{({\mathcal B}_{n-m})^q}\mid \forall\alpha\in{\mathcal B}_{n-m}:d_{\alpha}(\omega)\leq 1\right\}.
\end{equation*}
For every $\omega\in S$,
\begin{equation*}
R(\omega)=\frac{1}{\prdct q {2^n}}=\prod_{j=0}^{q-1}\frac{1}{1-\frac{j}{2^n}}\geq \prod_{j=0}^{q-1}\left(1+\frac{j}{2^n}\right)
\end{equation*}
and hence
\begin{equation*}
R(\omega)-1\geq\sum_{j=0}^{q-1}\frac{j}{2^n}=\frac{q(q-1)/2}{2^n}.
\end{equation*}
For every $1\leq k\leq q-1$ we have
\begin{equation*}
\left(1-\frac{k}{2^{n-m}}\right)\left(1-\frac{q-k}{2^{n-m}}\right)\geq 1-\frac{q}{2^{n-m}},
\end{equation*}
and hence, by Bernoulli's inequality,
\begin{align*}
\frac{|S|}{2^{(n-m)q}}&=\left(1-\frac{1}{2^{n-m}}\right)\left(1-\frac{2}{2^{n-m}}\right)\cdots\left(1-\frac{q-1}{2^{n-m}}\right)\geq\left(1-\frac{q}{2^{n-m}}\right)^{\frac{q-1}{2}}\\
&=\left(\left(1-\frac{q}{2^{n-m}}\right)^{\frac{q-1}{2^{17}}}\right)^{2^{16}}\geq\left(1-\frac{q-1}{2^{17}}\cdot\frac{q}{2^{n-m}}\right)^{2^{16}}>\left(1-\frac{q^2}{2^{n-m+17}}\right)^{2^{16}}\geq\left(\frac{1}{2}\right)^{2^{16}}.
\end{align*}
Therefore, by \eqref{eq:AdvR},
\begin{gather*}{\bf Adv}_{n,m}(q)\geq\frac{1}{2^{(n-m)q}}\left|\sum_{\omega\in S}\left(R(\omega)-1\right)\right|\geq\frac{|S|}{2^{(n-m)q}}\cdot\frac{q(q-1)/2}{2^n}\geq\left(\frac{1}{2}\right)^{2^{16}+1}\frac{q(q-1)}{2^n}.
\end{gather*}
Finally, if $2^{n-m-1}+1\leq q\leq 2^{\frac{n-m}{2}+8}$, then by what we already proved, 
\begin{align*}{\bf Adv}_{n,m}(q)&\geq {\bf Adv}_{n,m}(2^{n-m-1}+1)\geq\left(\frac{1}{2}\right)^{2^{16}+1}\frac{\left(2^{n-m-1}+1\right)2^{n-m-1}}{2^n}\\
&\geq\left(\frac{1}{2}\right)^{2^{16}+1}\frac{2^{n-m}}{2^n}>\left(\frac{1}{2}\right)^{2^{16}+17}\frac{q(q-1)}{2^n}.\qedhere\end{align*}
\end{proof}
We now address the regime $2^{\frac{n-m}{2}+8}<q\leq 2^{\frac{n+m}{2}-3}$, in which
\begin{equation*}
\min\left\{\frac{q(q-1)}{2^n},\,\frac{q}{2^{\frac{n+m}{2}}},\,1\right\}=\Theta\left(\frac{q}{2^{\frac{n+m}{2}}}\right).
\end{equation*}
\begin{proposition}\label{prop:main}
If $2^{\frac{n-m}{2}+8}<q\leq 2^{\frac{n+m}{2}-3}$, then
\begin{equation*}
{\bf Adv}_{n,m}(q)=\Omega\left(\frac{q}{2^{\frac{n+m}{2}}}\right).
\end{equation*}
\end{proposition}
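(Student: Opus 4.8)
The plan is to exhibit an explicit distinguishing set $S$ and lower-bound $\sum_{\omega\in S}(R(\omega)-1)/|\Omega|$ using the probabilistic toolkit built up in the lemmas. The natural candidate is a set defined through the statistic $X$, since $X$ measures the excess number of collisions over what a random function would produce, and $R>1$ precisely when there are few collisions. Concretely, I would take
$$S:=\left\{\omega\in\Omega\,\middle|\, X(\omega)\le -c\cdot\frac{\sqrt{q(q-1)}}{2^{(n-m)/2}}\right\}$$
for a suitable small constant $c>0$, i.e.\ the set of reply-sequences with noticeably \emph{fewer} collisions than average. On this set $R$ should exceed $1$ by a definite margin, and by \eqref{eq:AdvR>1} the advantage is at least $\Pr(S)\cdot(\text{that margin})$.

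The key steps, in order, would be: (1) Reduce to $q$ a power of $2$. Since ${\bf Adv}_{n,m}$ is non-decreasing in $q$ (an adversary can ignore extra queries), it suffices to prove the bound for the largest power of $2$ below $q$, which changes $q$ by at most a factor of $2$ and is absorbed into the $\Omega(\cdot)$; the hypothesis $q>2^{\frac{n-m}{2}+8}$ ensures we stay in the range where Lemmas~\ref{lem:4moment} and~\ref{lem:polynom} apply. (2) Lower-bound $R$ on $S$: here I want a companion to Lemma~\ref{lem:lnRupper}, namely a bound of the form $\ln R\ge -C\frac{q^2}{2^{n+m}}-\frac{1}{2^m}X$ valid (at least) on the event $d_\alpha(\omega)\le 2^m$ for all $\alpha$; combined with $X\le -c\sqrt{q(q-1)}\,2^{-(n-m)/2}$ and the regime constraint $q\le 2^{\frac{n+m}{2}-3}$ (which makes $\frac{q^2}{2^{n+m}}$ small and makes $\frac{1}{2^m}|X|$ the dominant, favorable term), this gives $\ln R \gtrsim \frac{c}{2^m}\cdot\frac{\sqrt{q(q-1)}}{2^{(n-m)/2}} = \Theta\!\left(\frac{q}{2^{(n+m)/2}}\right)$, hence $R-1 = \Omega\!\left(\frac{q}{2^{(n+m)/2}}\right)$ since this quantity is at most a constant. (3) Lower-bound $\Pr(S)$ by a positive constant: this is exactly the second-moment/fourth-moment style estimate of Lemma~\ref{lem:polynom}, applied to $-X$ rather than $X$ — since $\mathrm{E}X=0$, $\mathrm{E}X^2=\Theta(q(q-1)2^{-(n-m)})$, and $\mathrm{E}X^4$ is controlled by Lemma~\ref{lem:4moment}, a Paley–Zygmund / polynomial-test argument identical in spirit to Lemma~\ref{lem:polynom} shows $\Pr(X\le -c\sqrt{q(q-1)}\,2^{-(n-m)/2})\ge$ const. (Symmetry of the relevant polynomial test in $X\mapsto -X$ is not automatic because $\mathrm{E}X^3\neq 0$, so I would re-run the argument with a test polynomial whose sign pattern is adapted to the negative tail, or simply verify the odd-moment contributions are negligible in this regime, which they are since $q\le 2^{\frac{n+m}{2}-3}$.) (4) Combine: ${\bf Adv}_{n,m}(q)=\mathrm{E}\max\{R-1,0\}\ge \Pr(S)\cdot\Omega\!\left(\frac{q}{2^{(n+m)/2}}\right)=\Omega\!\left(\frac{q}{2^{(n+m)/2}}\right)$.

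The main obstacle I expect is step (2): getting a clean \emph{lower} bound on $R$ that is uniform over $S$. The upper bound in Lemma~\ref{lem:lnRupper} used that $F$ is maximized at the balanced configuration; for a lower bound on $R$ one needs to control $\sum_\alpha(\ln W(d_\alpha,2^m)+\binom{d_\alpha}{2}2^{-m})$ from \emph{below}, which requires both a lower estimate (Lemma~\ref{lem:Wlower}, needing $d_\alpha\le 2^m/2$) and an upper estimate on each $\ln W(d_\alpha,2^m)$ term's deviation, plus controlling the denominator $W(q,2^n)$ via Lemmas~\ref{lem:Wupper} and~\ref{lem:Wlower}. One has to either restrict $S$ further to the (overwhelmingly likely, in this regime, since $q\ll 2^n$ and typically $d_\alpha=O(q/2^{n-m})$ which is $\ll 2^m$) event that all $d_\alpha$ are small, and check this costs only a constant factor in $\Pr(S)$, or argue directly. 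A secondary annoyance is the asymmetry of $X$'s distribution noted in step (3), but the regime $q\le 2^{\frac{n+m}{2}-3}$ is generous enough that this is a routine adjustment of constants rather than a real difficulty.
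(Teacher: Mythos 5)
Your overall strategy---isolate the collision statistic $X$, prove anti-concentration via a fourth-moment polynomial test, and convert a deviation of $X$ into a deviation of $R$ from $1$---is the right one, and your steps (1), (3) and (4) are sound. In particular you are right that the negative-tail analogue of Lemma \ref{lem:polynom} is a routine mirroring of the test polynomial: the only asymmetry is the third moment, and $\mathrm{E}\bigl(2^{(n-m)/2}X/\sqrt{q(q-1)}\bigr)^3\le 2^{-(n-m)/2}+2^{-8}<1$, which costs only about $0.07$ in the final inequality. But step (2) is a genuine gap, and it arises exactly because you chose the harder of the two available tails. Working on $\{X\ll 0\}$, where $R>1$, forces you to prove a \emph{lower} bound $\ln R\ge -C\frac{q^2}{2^{n+m}}-\frac{1}{2^m}X$. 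The method you sketch---Lemma \ref{lem:Wlower} applied to each factor $W(d_\alpha,2^m)$ in the numerator together with Lemma \ref{lem:Wupper} for the denominator $W(q,2^n)$---leaves an uncancelled error term $\frac{1}{3\cdot 2^{2m}}\sum_\alpha d_\alpha^3$. For a typical, near-balanced configuration $d_\alpha\approx q/2^{n-m}$ this is of order $q^3/2^{2n}$, which exceeds the target gain $\Theta\bigl(q/2^{(n+m)/2}\bigr)$ by a factor of $q^2 2^{(m-3n)/2}$, i.e.\ by up to $2^{(3m-n)/2-6}$ at the top of the range $q\le 2^{(n+m)/2-3}$. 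So for $m$ appreciably larger than $n/3$ the bound is destroyed, and restricting to small $\max_\alpha d_\alpha$ does not help, since the \emph{typical} value of $\sum_\alpha d_\alpha^3$ is already too large. The cubic term can only be defeated by cancellation against the matching term $+\frac{q^3}{6\cdot 2^{2n}}$ hidden in $-\ln W(q,2^n)$, and extracting that cancellation requires a two-sided, second-order analysis of the functional $F$ of Lemma \ref{lem:F} near its maximizer---a genuinely new lemma, not a routine combination of the stated ones. (Not coincidentally, $n/3$ is the same threshold at which the bound of Hall et al.\ deteriorates.)

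The paper sidesteps all of this by flipping the tail. Since $\mathrm{E}R=1$, the expressions \eqref{eq:AdvR>1} and \eqref{eq:AdvR<1} are equal, so one may just as well work on the event $\{X\gg 0\}$, where $R<1$ and only an \emph{upper} bound on $R$ is needed; that bound is Lemma \ref{lem:lnRupper}, which is clean precisely because it rests on the one-sided variational fact that $F$ is maximized at the balanced configuration (Lemmas \ref{lem:F} and \ref{lem:moreW}), and Lemma \ref{lem:polynom} is already stated for the positive tail. With this single change of sign your outline becomes the paper's proof: on $\bigl\{X>\frac{1}{10}\sqrt{q(q-1)}\,2^{-(n-m)/2}\bigr\}$ one gets $1-R>1-e^{-\frac{1}{27}\sqrt{q(q-1)}\,2^{-(n+m)/2}}$, and \eqref{eq:AdvR<1} together with $\Pr(\cdot)>1/400$ finishes the argument.
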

For $\omega=(\omega_i)_{i=1}^q\in{({\mathcal B}_{n-m})^q}$, let 
\begin{equation*}
\text{Col}(\omega):=\#\{1\leq i<j\leq q\mid\omega_i=\omega_j\}
\end{equation*}
be the number of collisions in the sequence $\omega$ and let 
\begin{equation*}
X(\omega):=\text{Col}(\omega)-{\mathbb E}\,\text{Col}=\sum_{\alpha\in{\mathcal B}_{n-m}}\binom{d_\alpha(\omega)}{2}-\binom{q}{2}\frac{1}{2^{n-m}},
\end{equation*}
where all the probabilistic notions, such as expectation, here and below, are with respect to the uniform distribution on ${({\mathcal B}_{n-m})^q}$.
Proposition \ref{prop:main} will easily follow from the following technical lemmas.
\begin{lemma}\label{lem:lnRupper}
Suppose that $q$ is a power of $2$. Then,
\begin{equation*}R\leq \exp\left(\frac{q^2}{2^{n+m+1}}-\frac{1}{2^m}X\right).\end{equation*}
\end{lemma}
The proof of Lemma \ref{lem:lnRupper} will be given in Section \ref{sec:lnRupper}.
\begin{lemma}\label{lem:polynom}
If $q>2^{\frac{n-m}{2}+8}$, then
\begin{equation*}
\Pr\left(X>\frac{q}{10\cdot 2^{\frac{n-m}{2}}}\right)>\frac{1}{400}.
\end{equation*}
\end{lemma}
The proof of Lemma \ref{lem:polynom} will be given in Section \ref{sec:polynom}.
We proceed to prove Proposition \ref{prop:main}.
\begin{proof} [Proof of Proposition \ref{prop:main}]
With no loss of generality we may assume that $q$ is a power of $2$. 
Let 
\begin{equation*}
S:=\{\omega\in({\mathcal B}_{n-m})^q\mid X(\omega)>\frac{q}{10\cdot 2^{\frac{n-m}{2}}}\}.
\end{equation*} 
By Lemma \ref{lem:polynom}, $\frac{|S|}{2^{(n-m)q}}=\Pr(S)>\frac{1}{400}$.
For every $\omega\in S$,
\begin{equation*}
\frac{q^2}{2^{n+m+1}}-\frac{1}{2^m}X(\omega)<\frac{q^2}{2^{n+m+1}}-\frac{1}{2^m}\cdot\frac{q}{10\cdot 2^{\frac{n-m}{2}}}
=-\frac{1}{10}\left(1-\frac{5q}{2^{\frac{n+m}{2}}}\right)\frac{q}{2^{\frac{n+m}{2}}}<- \frac{3}{80}\cdot\frac{q}{2^{\frac{n+m}{2}}}
\end{equation*}
and hence, by Lemma \ref{lem:lnRupper},
\begin{equation*}
1-R(\omega)> 1-\exp\left(- \frac{3}{80}\cdot\frac{q}{2^{\frac{n+m}{2}}}\right).
\end{equation*}
Therefore, by \eqref{eq:AdvR},
\begin{align*}{\bf Adv}_{n,m}(q)&\geq\frac{1}{2^{(n-m)q}}\left|\sum_{\omega\in S}\left(R(\omega)-1\right)\right|\geq\frac{|S|}{2^{(n-m)q}}\left(1-\exp\left(- \frac{3}{80}\cdot\frac{q}{2^{\frac{n+m}{2}}}\right)\right)\\
&>\frac{1}{400}\left(1-\exp\left(- \frac{3}{80}\cdot\frac{q}{2^{\frac{n+m}{2}}}\right)\right)=\Omega\left(\frac{q}{2^{\frac{n+m}{2}}}\right).
\qedhere\end{align*}
\end{proof}
Now we can prove Theorem \ref{thm:main}.

\begin{proof}[Proof of Theorem \ref{thm:main}]
The upper bound was already demonstrated in the introduction, so we only need to show that
\begin{equation*}
{\bf Adv}_{n,m}(q)=\Omega\left(\min\left\{\frac{q(q-1)}{2^n},\,\frac{q}{2^{\frac{n+m}{2}}},\,1\right\}\right).
\end{equation*}
If $q\leq 2^{\frac{n-m}{2}+8}$, then by Proposition \ref{prop:small},
\begin{equation*}
{\bf Adv}_{n,m}(q)=\Omega\left(\frac{q(q-1)}{2^n}\right).
\end{equation*}
If $2^{\frac{n-m}{2}+8}<q\leq 2^{\frac{n+m}{2}-3}$, then by Proposition \ref{prop:main},
\begin{equation*}
{\bf Adv}_{n,m}(q)=\Omega\left(\frac{q}{2^{\frac{n+m}{2}}}\right).
\end{equation*}
Finally, if $q>2^{\frac{n+m}{2}-3}$, then by Proposition \ref{prop:main},
\begin{equation*}{\bf Adv}_{n,m}(q)\geq {\bf Adv}_{n,m}\left(2^{\frac{n+m}{2}-3}\right)=\Omega\left(\frac{2^{\frac{n+m}{2}-3}}{2^{\frac{n+m}{2}}}\right)=\Omega(1).\qedhere\end{equation*}
\end{proof}

\section{Proof of Lemma \ref{lem:lnRupper}}\label{sec:lnRupper}
For every positive real $t$ and nonnegative integer $k$, denote
$L_t(k):=\ln \prdct {k}{t}+\binom{k}{2}\frac{1}{t}$.

\begin{lemma}
For every positive real $t$ and positive integer $k\leq t/2$, it holds that
\begin{subequations} 
\begin{align}\label{eq:Wlower}
L_t(k)&\geq -\frac{k^3}{3t^2},\\
\label{eq:moreW}
\frac{1}{k}L_t(k)-\frac{1}{2k}L_{2t}(2k)&\leq\frac{k}{2t^2}-\frac{2k}{2(2t)^2},
\end{align}
and consequently, for every positive integer $\ell$,
\begin{equation}\label{eq:moreWtelescopic}
\frac{1}{k}L_t(k)-\frac{1}{2^{\ell}k}L_{2^{\ell}t}(2^{\ell}k)\leq\frac{k}{2t^2}-\frac{2^{\ell}k}{2(2^{\ell}t)^2}.
\end{equation}
\end{subequations} 
\end{lemma}
\begin{proof}
For every $x<1$, let $\varphi(x):=x+x^2+\ln(1-x)$. Then, for every $x<1$,
\begin{equation*}
\varphi'(x)=1+2x-\frac{1}{1-x}=\frac{x(1-2x)}{1-x}.
\end{equation*}
Therefore, $\varphi$ is increasing in the interval $[0,\frac{1}{2}]$. 
In particular, for every $0\leq x\leq 1/2$,
\begin{equation}\label{eq:ln_upper}\ln(1-x)+x=-x^2+\varphi(x)\geq -x^2+\varphi(0)=-x^2. \end{equation}
The estimate \eqref{eq:Wlower} immediately follows:
\begin{equation*}
L_t(k)=\sum_{j=0}^{k-1}\left(\ln\left(1-\frac{j}{t}\right)+\frac{j}{t}\right)\geq -\sum_{j=0}^{k-1}\frac{j^2}{t^2}=-\frac{k(k-1)(2k-1)}{6t^2}\geq-\frac{k^3}{3t^2}.
\end{equation*}
 To get \eqref{eq:moreW}, observe first that
\begin{equation*}
\left(\frac{\prod_{j=0}^{k-1}\left(1-\frac{2j+1}{2t}\right)}{\prod_{j=0}^{k-1}\left(1-\frac{2j}{2t}\right)}\right)^2=\frac{\prod_{j=0}^{k-1}\left(1-\frac{2j+1}{2t}\right)^2}{\prod_{j=0}^{k-1}\left(1-\frac{2j}{2t}\right)\left(1-\frac{2j+2}{2t}\right)}\left(1-\frac{k}{t}\right)\geq 1-\frac{k}{t},
\end{equation*}
and hence
\begin{equation*}
\frac{{\prdct {2k}{2t}}}{({\prdct k t})^2}
=\frac{\prod_{j=0}^{k-1}\left(1-\frac{2j}{2t}\right)\left(1-\frac{2j+1}{2t}\right)}{\prod_{j=0}^{k-1}\left(1-\frac{2j}{2t}\right)^2}=\frac{\prod_{j=0}^{k-1}\left(1-\frac{2j+1}{2t}\right)}{\prod_{j=0}^{k-1}\left(1-\frac{2j}{2t}\right)}\geq\sqrt{1-\frac{k}{t}}.
\end{equation*}
Therefore, 
\begin{align*}
\frac{1}{2k}L_{2t}(2k)-\frac{1}{k}L_t(k)&=\frac{1}{2k}\ln\frac{{\prdct {2k}{2t}}}{({\prdct k t})^2}+\frac{1}{4t}\geq \frac{1}{2k}\ln\sqrt{1-\frac{k}{t}}+\frac{1}{4t}\\
&=\frac{1}{4k}\left(\ln\left(1-\frac{k}{t}\right)+\frac{k}{t}\right)\geq -\frac{1}{4k}\left(\frac{k}{t}\right)^2=\frac{2k}{2(2t)^2}-\frac{k}{2t^2},
\end{align*}
where the second inequality holds by \eqref{eq:ln_upper}, and \eqref{eq:moreW} follows.

Finally, for every $1\leq j\leq \ell$, by applying \eqref{eq:moreW} to $2^{j-1} k$ and $2^{j-1} t$, it holds that
\begin{equation*}
\frac{1}{2^{j-1} k}L_{2^{j-1} t}(2^{j-1} k)-\frac{1}{2^j k}L_{2^j t}(2^j k)\leq\frac{2^{j-1} k}{2(2^{j-1} t)^2}-\frac{2^j k}{2(2^j t)^2},
\end{equation*}
and \eqref{eq:moreWtelescopic} follows by summing up these inequalities and collapsing the obtained telescopic sums, as follows:
\begin{align*}
\frac{1}{k}L_t(k)-\frac{1}{2^{\ell}k}L_{2^{\ell}t}(2^{\ell}k)&=\sum_{j=1}^{\ell}\left(\frac{1}{2^{j-1} k}L_{2^{j-1} t}(2^{j-1} k)-\frac{1}{2^j k}L_{2^j t}(2^j k)\right)\\
&\leq\sum_{j=1}^{\ell}\left(\frac{2^{j-1} k}{2(2^{j-1} t)^2}-\frac{2^j k}{2(2^j t)^2}\right)=\frac{k}{2t^2}-\frac{2^{\ell}k}{2(2^{\ell}t)^2}.
\qedhere\end{align*}
\end{proof}
Let ${\mathcal D}$ be the set of sequences $(d_{\alpha})_{\alpha\in{\mathcal B}_{n-m}}$ of nonnegative integers such that $d_{\alpha}\leq 2^m$ for every $\alpha\in{\mathcal B}_{n-m}$ and $\sum_{\alpha\in{\mathcal B}_{n-m}}d_{\alpha}=q$.
\begin{lemma}\label{lem:F}
Suppose that $q$ is a power of $2$. 
For every $(d_{\alpha})_{\alpha\in{\mathcal B}_{n-m}}\in{\mathcal D}$,
\begin{equation*}
\sum_{\alpha\in{\mathcal B}_{n-m}}L_{2^m}(d_{\alpha})\leq \begin{cases}
0 &\quad q<2^{n-m}, \\
2^{n-m}L_{2^m}\left(\frac{q}{2^{n-m}}\right) &\quad q\geq 2^{n-m}.
\end{cases}
\end{equation*}
\end{lemma}
\begin{proof}
Note that $L_{2^m}(0)=L_{2^m}(1)=0$. For every integer $0\leq d\leq 2^m-1$, 
\begin{equation*}
L_{2^m}(d+1)-L_{2^m}(d)=\ln \frac{\prdct {d+1}{2^m}}{\prdct d {2^m}}+\left(\binom{d+1}{2}-\binom{d}{2}\right)\frac{1}{2^m}=\ln\left(1-\frac{d}{2^m}\right)+\frac{d}{2^m}.
\end{equation*}
Hence, since the function $x\mapsto\ln(1-x)+x$ is strictly decreasing in the interval $[0,1)$, it holds that for every $0\leq d_1<d_2\leq 2^m-1$,
\begin{equation*}
L_{2^m}(d_2+1)-L_{2^m}(d_2)<L_{2^m}(d_1+1)-L_{2^m}(d_1),
\end{equation*}
i.e.,
\begin{equation*}
L_{2^m}(d_1)+L_{2^m}(d_2+1)<L_{2^m}(d_1+1)+L_{2^m}(d_2).
\end{equation*}
It follows that the maximum $\sum_{\alpha\in{\mathcal B}_{n-m}}L_{2^m}(d_{\alpha})$ for $(d_{\alpha})_{\alpha\in{\mathcal B}_{n-m}}\in{\mathcal D}$ is attained for sequences $(d_{\alpha})_{\alpha\in{\mathcal B}_{n-m}}$ for which $|d_{\alpha_1}-d_{\alpha_2}|\leq 1$ for every $\alpha_1,\alpha_2\in{\mathcal B}_{n-m}$.
In particular, if $q\geq 2^{n-m}$ then the maximum of $\sum_{\alpha\in{\mathcal B}_{n-m}}L_{2^m}(d_{\alpha})$ for $(d_{\alpha})_{\alpha\in{\mathcal B}_{n-m}}\in{\mathcal D}$ is attained at the sequence $(d_{\alpha})_{\alpha\in{\mathcal B}_{n-m}}$ such that $d_{\alpha}=q/2^{n-m}$ for every $\alpha\in{\mathcal B}_{n-m}$; 
if $q<2^{n-m}$ then the maximum of $\sum_{\alpha\in{\mathcal B}_{n-m}}L_{2^m}(d_{\alpha})$ for $(d_{\alpha})_{\alpha\in{\mathcal B}_{n-m}}\in{\mathcal D}$ is attained at any $(d_{\alpha})_{\alpha\in{\mathcal B}_{n-m}}\in{\mathcal D}$ for which $d_{\alpha}\leq 1$ for every $\alpha\in{\mathcal B}_{n-m}$.
The lemma follows.
\end{proof}

\begin{proof}[Proof of Lemma \ref{lem:lnRupper}]
Let $\omega=(\omega_i)_{i=1}^q\in{({\mathcal B}_{n-m})^q}$. If $d_{\alpha}(\omega)>2^m$ for some $\alpha\in{\mathcal B}_{n-m}$, then surely
\begin{equation*}R(\omega)=0<\exp\left(\frac{q^2}{2^{n+m+1}}-\frac{1}{2^m}X(\omega)\right).\end{equation*}
We therefore assume that $d_{\alpha}(\omega)\leq 2^m$ for every $\alpha\in{\mathcal B}_{n-m}$, and hence $\left(d_{\alpha}(\omega)\right)_{\alpha\in{\mathcal B}_{n-m}}\in{\mathcal D}$.
Note that
\begin{equation*}
\ln R(\omega)+\frac{1}{2^m}X(\omega)=\left(\sum_{\alpha\in{\mathcal B}_{n-m}}L_{2^m}(d_{\alpha}(\omega))\right)-L_{2^n}(q).
\end{equation*}
Hence, if $q<2^{n-m}$ then by Lemma \ref{lem:F} and \eqref{eq:Wlower},
\begin{equation*}\ln R(\omega)+\frac{1}{2^m}X(\omega)\leq -L_{2^n}(q)\leq \frac{q^3}{3\cdot 2^{2n}}
<\frac{q^2}{2^{n+m+1}},
\end{equation*}
and if $q\geq 2^{n-m}$ then by Lemma \ref{lem:F} and \eqref{eq:moreWtelescopic},
\begin{align*}
\ln R(\omega)+\frac{1}{2^m}X(\omega)&\leq 2^{n-m}L_{2^m}\left(\frac{q}{2^{n-m}}\right)-L_{2^n}(q)=q\left(\frac{1}{\frac{q}{2^{n-m}}}L_{2^m}\left(\frac{q}{2^{n-m}}\right)-\frac{1}{q}L_{2^n}(q)\right)\\
&\leq q\left(\frac{\frac{q}{2^{n-m}}}{2(2^{m})^2}-\frac{q}{2(2^{n})^2}\right)<q\frac{\frac{q}{2^{n-m}}}{2(2^{m})^2}=\frac{q^2}{2^{n+m+1}},
\end{align*}
and the result follows.
\end{proof}

\section{Proof of Lemma \ref{lem:polynom}}\label{sec:polynom} 

Denote $p:=\frac{1}{2^{n-m}}$ and let $\tilde{X}:=\frac{1}{q\sqrt{p}}X$. 
The proof of Lemma \ref{lem:polynom} will be based on the following technical claim.

\begin{claim}\label{claim:real_polynom}
If $q>2^{\frac{n-m}{2}+8}$, then there is a real polynomial $\varphi$ satisfying the following properties.
\begin{subequations} 
\begin{align}\label{eq:1}
&\varphi(x)\leq 0 \text{ for every } x\leq \frac{1}{10},\\
\label{eq:2}
&\varphi(x)<200 \text{ for every real } x,\\
\label{eq:3}
&\mathbb{E}\,\varphi(\tilde{X})>\frac{1}{2}.
\end{align}
\end{subequations} 
\end{claim}

We will first show how Lemma \ref{lem:polynom} may be deduced from Claim \ref{claim:real_polynom}. 

\begin{proof}[Proof of Lemma \ref{lem:polynom}]
Let $\varphi$ be as in Claim \ref{claim:real_polynom}.
By \eqref{eq:2}, the random variable $200-\varphi(\tilde{X})$ is nonnegative.
Hence, by Markov's inequality,
\begin{equation}\label{eq:markov}
\Pr\left(\varphi(\tilde{X})\leq 0\right)=\Pr\left(200-\varphi(\tilde{X})\geq 200\right)\leq \frac{\mathbb{E}\left(200-\varphi(\tilde{X})\right)}{200}=1-\frac{\mathbb{E}\,\varphi(\tilde{X})}{200}.
\end{equation}
By \eqref{eq:1}, 
$\left\{X\leq\frac{q\sqrt{p}}{10}\right\}=\left\{\tilde{X}\leq\frac{1}{10}\right\}\subseteq\{\varphi(\tilde{X})\leq 0\}$.
Therefore, by using \eqref{eq:markov} and \eqref{eq:3},
\begin{equation*}
\Pr\left(X>\frac{q}{10\cdot 2^{\frac{n-m}{2}}}\right)=\Pr\left(X>\frac{q\sqrt{p}}{10}\right)\geq\Pr\left(\varphi(\tilde{X})>0\right)\geq\frac{\mathbb{E}\,\varphi(\tilde{X})}{200}>\frac{1}{400}.
\qedhere\end{equation*}
\end{proof}

We proceed to prove Claim \ref{claim:real_polynom}.
A straightforward calculation (which we include in the appendix, for completeness) yields that
\begin{subequations} \label{eq:moments}
\begin{align}
\mathbb{E}X=&0,\label{eq:E1}\\
\mathbb{E}X^2=&\binom{q}{2}p\left(1-p\right),\label{eq:E2}\\
\mathbb{E}X^3=&6\binom{q}{3}p^2\left(1-p\right)+\binom{q}{2}p\left(1-p\right)\left(1-2p\right),\label{eq:E3}\\
\mathbb{E}X^4=&18\binom{q}{4}p^2\left(1-p\right)\left(1+3p\right)+18\binom{q}{3}p^2\left(1-p\right)\left(3-5p\right)\label{eq:E4}\\
&+\binom{q}{2}p\left(1-p\right)\left(1-3p+3p^2\right).\nonumber
\end{align}
\end{subequations}

\begin{proof}[Proof of Claim \ref{claim:real_polynom}]
For every real $x$, let
\begin{equation*}
\varphi(x):=-\left(x+\frac{5}{2}\right)^2\left(x-\frac{1}{10}\right)\left(x-5\right)=-x^4+\frac{1}{10}x^3+\frac{75}{4}x^2+\frac{235}{8}x-\frac{25}{8}.\end{equation*}
Clearly, $\varphi(x)\leq 0$ for every $x\leq \frac{1}{10}$.
For every real $x$,
\begin{equation*}
\varphi'(x)=-4\left(x+\frac{5}{2}\right)\left(x-\frac{103-\sqrt{29409}}{80}\right)\left(x-\frac{103+\sqrt{29409}}{80}\right).
\end{equation*}
It follows that $\varphi(x)\leq\varphi\left(\frac{103+\sqrt{29409}}{80}\right)<200$ for every real $x$.
It remains to show that ${\mathbb E}\,\varphi(\tilde{X})>\frac{1}{2}$.

First, note that
$(1-p)(1+3p)\leq\frac{21}{16}$
(this may be verified by direct computation for $n-m=1$, and if $n-m\geq 2$ then $p\leq\frac{1}{4}$ and hence $(1-p)(1+3p)\leq\left(1-\frac{1}{4}\right)\left(1+\frac{3}{4}\right)=\frac{21}{16}$, since the function $x\mapsto (1-x)(1+3x)$ is increasing in the interval $[0,\frac{1}{3}]$).
 Therefore,
\begin{equation*}
18\binom{q}{4}p^2(1-p)(1+3p)<18\cdot\frac{q^4}{24}\cdot p^2\cdot\frac{21}{16}=\left(1-\frac{1}{2^6}\right)(q\sqrt{p})^4.
\end{equation*}
Next, note that $(1-p)(3-5p)<\frac{3}{4\sqrt{p}}$ 
(this may be verified by direct computation for $1\leq n-m\leq 3$, and if $n-m\geq 4$ then $(1-p)(3-5p)<3\leq\frac{3}{4\sqrt{p}}$).
Therefore, since $q\sqrt{p}>2^8$,
\begin{equation*}
18\binom{q}{3}p^2(1-p)(3-5p)<3q^3p^2\frac{3}{4\sqrt{p}}=\frac{9}{2^2}(q\sqrt{p})^3<\frac{9}{2^{10}}(q\sqrt{p})^4.
\end{equation*}
Additionally, 
\begin{equation*}
\binom{q}{2}p^2(1-p)(1-3p+3p^2)<\frac{1}{2}q^2p=\frac{1}{2}(q\sqrt{p})^2<\frac{1}{2^{17}}(q\sqrt{p})^4.
\end{equation*}
Therefore, by \eqref{eq:E4}, 
\begin{equation*}
\mathbb{E}X^4<\left(1-\frac{1}{2^6}+\frac{9}{2^{10}}+\frac{1}{2^{17}}\right)(q\sqrt{p})^4<(q\sqrt{p})^4,
\end{equation*}
i.e., ${\mathbb E}\tilde{X}^4<1$. 
Additionally,
${\mathbb E}\tilde{X}=0$ by \eqref{eq:E1},
${\mathbb E}\tilde{X}^2=\left(1-\frac{1}{q}\right)\frac{1-p}{2}>\left(1-\frac{1}{2^8}\right)\frac{1}{4}$ by \eqref{eq:E2}, 
and $\mathbb{E}\tilde{X}^3\geq 0$ by \eqref{eq:E3}.
Therefore,
\begin{equation*}
{\mathbb E}\,\varphi(\tilde{X})=-\mathbb{E}\tilde{X}^4+\frac{1}{10}\mathbb{E}\tilde{X}^3+\frac{75}{4}\mathbb{E}\tilde{X}^2+\frac{235}{8}\mathbb{E}\tilde{X}-\frac{25}{8}\geq-1+\frac{75}{4}\left(1-\frac{1}{2^8}\right)\frac{1}{4}-\frac{25}{8}>\frac{1}{2}.
\qedhere\end{equation*}
\end{proof}

\subsection*{Acknowledgments}
We thank Ron Peled for fruitful discussions.

\subsection*{Funding}
This research was partially supported by the Bar-Ilan University Center for Research in Applied Cryptography and Cyber Security, and the Center for Cyber Law and Policy at the University of Haifa, both in conjunction with the Israel National Cyber Bureau in the Prime Minister's Office;
the Israel Science Foundation (ISF, grant number 3380/19); 
and a joint funding research grant of the U.S. National Science Foundation and the U.S.--Israel Binational Science Foundation (NSF--BSF, grant number 2018640).

\appendix

\section{Proof of \eqref{eq:moments}}
Let ${\mathcal E}:=\{\{i,j\}\mid 1\leq i<j\leq q\}$.
For every $(i,j)\in{\mathcal E}$, let $Y_{\{ i,j\}}$ be the indicator function of the event $\{\omega_i=\omega_j\}$, and let $X_{\{ i,j\}}:=Y_{\{ i,j\}}-{\mathbb E}Y_{\{ i,j\}}=Y_{\{ i,j\}}-p$. Evidently ${\mathbb E}X_e=0$ for every $e\in{\mathcal E}$ and \eqref{eq:E1} follows, since $X=\sum_{e\in{\mathcal E}}X_e$.
Since the events $(\{\omega_i=\omega_j\})_{(i,j)\in{\mathcal E}}$ are mutually independent, it holds that for every $e_1,e_2\in{\mathcal E}$,
\begin{equation*}\label{eq:cov}
{\mathbb E}X_{e_1}X_{e_2}={\rm Cov}(X_{e_1},X_{e_2})={\rm Cov}(Y_{e_1},Y_{e_2})=
\begin{cases}
0 & \quad e_1\neq e_2,\\
p(1-p) & \quad e_1=e_2,
\end{cases}
\end{equation*} 
and \eqref{eq:E2} follows.
For every $e_1,e_2,e_3\in{\mathcal E}$, 
\begin{equation*}
\mathbb{E}Y_{e_1}Y_{e_2}Y_{e_3}=\begin{cases}
p &\quad e_1=e_2=e_3, \\
p^2 &\quad |\{e_1,e_2,e_3\}|=2 \text{ or } |e_1\cup e_2\cup e_3|=3,\\
p^3 &\quad \text{otherwise,} 
\end{cases}\end{equation*}
and on the other hand,
\begin{align*}\mathbb{E}Y_{e_1}Y_{e_2}Y_{e_3}=& \mathbb{E}X_{e_1}X_{e_2}X_{e_3}+p\sum_{1\leq i_1<i_2\leq 3}\mathbb{E}X_{e_{i_1}}X_{e_{i_2}}+p^2\sum_{i=1}^3\mathbb{E}X_{e_i}+p^3\\
=&\mathbb{E}X_{e_1}X_{e_2}X_{e_3}+p^3+
\begin{cases}
3p^2(1-p) &\quad e_1=e_2=e_3, \\
p^2(1-p) &\quad |\{e_1,e_2,e_3\}|=2,\\
0 &\quad \text{otherwise.} 
\end{cases}\end{align*} 
Hence, for every $e_1,e_2,e_3\in{\mathcal E}$, 
\begin{equation}\label{eq:third}
{\mathbb E}X_{e_1}X_{e_2}X_{e_3}=\begin{cases}
p(1-p)(1-2p) &\quad e_1=e_2=e_3, \\
p^2(1-p) &\quad|\{e_1,e_2,e_3\}|=3 \text{ and } |e_1\cup e_2\cup e_3|=3,\\
0 &\quad \text{otherwise,} 
\end{cases}
\end{equation}
and \eqref{eq:E3} follows.
We proceed to prove \eqref{eq:E4}.
Let
\begin{align*}
{\mathcal P}&:=\{(e_1,e_2,e_3,e_4)\in{\mathcal E}^4\mid \forall 1\leq i\leq 4: |\{1\leq j\leq 4\mid e_j=e_i\}|=2\},\\
{\mathcal T}&:=\{(e_1,e_2,e_3,e_4)\in{\mathcal E}^4\mid \text{the graph that the edges } e_1,e_2,e_3,e_4 \text{ form contains a triangle}\},\\
{\mathcal Q}&:=\{(e_1,e_2,e_3,e_4)\in{\mathcal E}^4\mid \text{the graph that the edges } e_1,e_2,e_3,e_4 \text{ form a quadraliteral}\}.
\end{align*}
For every $e_1,e_2,e_3,e_4\in{\mathcal E}$, 
\begin{equation*}\label{eq:4}\mathbb{E}Y_{e_1}Y_{e_2}Y_{e_3}Y_{e_4}=\begin{cases}
p &\quad e_1=e_2=e_3=e_4, \\
p^2 &\quad |\{e_1,e_2,e_3,e_4\}|=2 \text{ or } |e_1\cup e_2\cup e_3\cup e_4|=3, \\
p^3 &\quad \text{either } |\{e_1,e_2,e_3,e_4\}|=3 \text{ or } \\
 & \quad (e_1,e_2,e_3,e_4)\in{\mathcal T}\cup{\mathcal Q} \text{ (but not both),}\\
p^4 &\quad \text{otherwise,} 
\end{cases}\end{equation*}
and on the other hand, by using \eqref{eq:third}, 
\begin{align*}\mathbb{E}&Y_{e_1}Y_{e_2}Y_{e_3}Y_{e_4}=\mathbb{E}X_{e_1}X_{e_2}X_{e_3}X_{e_4}+p\sum_{1\leq i_1<i_2<i_3\leq 4}\mathbb{E}X_{e_{i_1}}X_{e_{i_2}}X_{e_{i_3}}\\
&+p^2\sum_{1\leq i_1<i_2\leq 4}\mathbb{E}X_{e_{i_1}}X_{e_{i_2}}+p^3\sum_{i=1}^4\mathbb{E}X_{e_i}+p^4
=\mathbb{E}X_{e_1}X_{e_2}X_{e_3}X_{e_4}+p^4\\
&+\begin{cases}
4p^2(1-p)(1-2p)+6p^3(1-p) &\quad e_1=e_2=e_3=e_4, \\
p^2(1-p)(1-2p)+3p^3(1-p) & \quad |\{e_1,e_2,e_3,e_4\}|=2 \text{ but } (e_1,e_2,e_3,e_4)\notin{\mathcal P},\\
2p^3(1-p) & \quad (e_1,e_2,e_3,e_4)\in{\mathcal P},\\
3p^3(1-p) & \quad |e_1\cup e_2\cup e_3\cup e_4|=3 \text{ and } (e_1,e_2,e_3,e_4)\in{\mathcal T},\\
p^3(1-p) & \quad \text{either } |\{e_1,e_2,e_3,e_4\}|=3 \text{ or }\\
& \quad (e_1,e_2,e_3,e_4)\in{\mathcal T}\text{ (but not both),}\\
0 &\quad \text{otherwise.} 
\end{cases}\end{align*}
Hence, for every $e_1,e_2,e_3,e_4\in{\mathcal E}$,
\begin{align*}\mathbb{E}&X_{e_1}X_{e_2}X_{e_3}X_{e_4}\\
&=
\begin{cases}
p(1-p)(1-3p+3p^2) &\quad e_1=e_2=e_3=e_4, \\
p^2(1-p)(1-2p) &\quad |\{e_1,e_2,e_3,e_4\}|=3 \text{ and } |e_1\cup e_2\cup e_3\cup e_4|=3, \\
p^2(1-p)^2 &\quad (e_1,e_2,e_3,e_4)\in{\mathcal P},\\
p^3(1-p) &\quad (e_1,e_2,e_3,e_4)\in{\mathcal Q}, \\
0 &\quad \text{otherwise,} 
\end{cases}
\end{align*}
and \eqref{eq:E4} follows.
\end{document}